\documentclass{article}

\usepackage{arxiv}

\usepackage[utf8]{inputenc} 
\usepackage[T1]{fontenc}    
\usepackage{hyperref}       
\usepackage{url}            
\usepackage{booktabs}       
\usepackage{amsfonts}       
\usepackage{nicefrac}       
\usepackage{microtype}      
\usepackage{lipsum}		
\usepackage{graphicx}
\usepackage{natbib}
\usepackage{doi}

\usepackage{nicefrac}       
\usepackage{amsthm}
\usepackage{amsmath}
\usepackage{bbm}
\usepackage{comment}
\usepackage{wrapfig}
\usepackage{subcaption}
\usepackage{color}

\graphicspath{{./img/}}

\newcommand{\R}{\mathbb{R}}

\newcommand{\C}{\mathbb{C}}

\newcommand{\e}{\mathrm{e}}
\renewcommand{\d}{\mathrm{d}}
\newcommand{\nn}{\mathrm{NN}}
\newcommand{\numer}{\mathrm{numer}}
\newcommand{\lc}{\ell}

\newtheorem{dfn}{Definition}[section]
\newtheorem{thm}{Theorem}[section]
\newtheorem{rmk}{Remark}[section]
\newtheorem{exm}{Example}[section]

\title{An Error Analysis Framework\\for Neural Network Modeling of Dynamical Systems}


\author{Shunpei Terakawa\\
    Graduate School of System Informatics\\
    Kobe University\\
    Kobe, Hyogo, Japan\\
	\texttt{s-terakawa@stu.kobe-u.ac.jp} \\
	\And
	Takashi Matsubara\\
	Graduate School of Engineering Science\\
	Osaka University\\
	Toyonaka, Osaka, Japan\\
	\texttt{matsubara@sys.es.osaka-u.ac.jp}\\
	\And
	Takaharu Yaguchi\\
	Graduate School of System Informatics\\
    Kobe University\\
    Kobe, Hyogo, Japan\\
	\texttt{yaguchi@pearl.kobe-u.ac.jp} \\
}




\begin{document}
\maketitle

\begin{abstract}
	We propose a theoretical framework for investigating a modeling error caused by numerical integration in the learning process of dynamics.
Recently, learning equations of motion to describe dynamics from data using neural networks has been attracting attention.
During such training, numerical integration is used to compare the data with the solution of the neural network model; 
however, discretization errors due to numerical integration prevent the model from being trained correctly.
In this study, we formulate the modeling error using the Dahlquist test equation that is commonly used in the analysis of numerical methods and apply it to some of the Runge--Kutta methods.
\end{abstract}

\keywords{learning dynamics\and modeling error\and neural networks\and Runge-Kutta methods}

\section{Introduction}
Data-driven approximation of differential equations by neural networks has a long history~\cite{anderson_comparison_1996,Wang1998RKNN,Oliveira2004,Raissi2019}. 
An important application is learning the governing equations of physical phenomena \cite{Greydanus2019}; for example, in \citet{Greydanus2019}, instead of time-derivatives of the state variables, the energy function is modeled by neural networks, thereby discovering the equation of motion. 
The objective of most of these studies is modeling continuous-time differential equations
$\dot{x}(t) = f(x)$
that describe the target dynamics by using the neural ODE model
    $\dot{x}(t)= f_{\nn} (x)$,
or its extensions.

Due to the difficulty of the observation of the values of $\dot{x}(t)$ in some practical situations, it would be expected that $x(t)$ at enough numbers of $t$'s are observed and hence given as the data.
In this case, numerical integrators (typically, an explicit Runge--Kutta method) are required to integrate the neural network models for learning and also for predicting the dynamics. 
However, the employment of the numerical integrators necessarily induces numerical errors, which results in producing non-negligible \textit{modeling errors} in the learned continuous-time model.
In other words, even if the difference between the data and the numerical solutions of the learned model could be reduced to zero, it does not necessarily mean that the model $f_{\nn}$ matches the target dynamics $f$.

These modeling errors are not a problem if the discrete models can be used as they are; for example, the learned model may be used to predict the dynamics by integrating the model with the same time step as the training data.
In such cases, the modeling error due to the discretization 
has no harmful effect because the model is trained so that 
it can reproduce the data as the numerical solutions completely.
However, the errors can be serious when the models need to be identified 
as continuous differential equations rather than discrete models. This is a common situation, for example, where the target system is a subsystem of a large-scale system. 
In such a case, the subsystems should be identified as continuous ones because each subsystem may have different timescales determined by the data sampling settings and it may not be possible to define a unique time step for the entire system. 
Even if in simpler cases, many commonly used ODE solvers employ the adaptive step size control, so the same kind of problem could happen.

To reduce the modeling errors, the integrators must be replaced or redesigned according to certain criteria.
This paper aims to provide a framework for such an analysis (Figure \ref{fig:overview}). 
In summary, the main contributions of this paper are:
\begin{enumerate}
\item introduction of a framework of theoretical analysis of the modeling errors caused by the numerical integrators, 
\item thereby providing the theoretical background for newly developing integrators that are suitable not only for computation but also for modeling.
\end{enumerate}

\begin{figure}[hbt]
\centering
\quad\quad\includegraphics[width=0.7\linewidth]{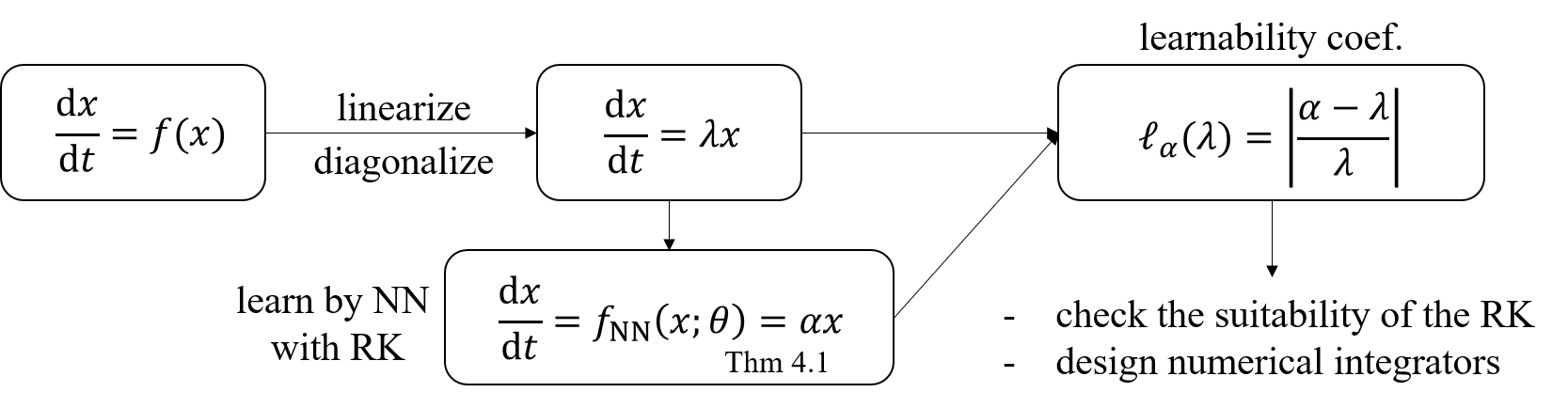}
\caption{
    An overview of the error analysis of the neural network modeling of ODE systems.
    We theoretically estimate the modeling error and propose a framework that enables us to select or design a suitable numerical integrator for learning dynamics according to the characteristics of the target dynamics.
\label{fig:overview}}
\end{figure}

\section{Proposed framework of learnability analysis}
The proposed framework parallels the classical analysis of the stability region. In fact, the problem to be addressed here is to a certain extent similar to the stability analysis of numerical integrators. 
As is well-known, the stability of each numerical integrator depends on the characteristics of the target differential equations, that is, how rapidly the solution decays and/or how rapidly the solution oscillates.
In the stability analysis, the stability region is defined by specifying $\lambda \in \C$ for which the numerical solution to 
the Dahlquist test equation
\begin{align}\label{eq:test-eq}
    \dot{x}(t)
    = \lambda x, \quad x: t \in \R \mapsto x(t) \in \C
\end{align}
by the integrator remains bounded.
The test equation is a representative equation for dynamics in the sense that most nonlinear differential equations describing physical phenomena are reduced to this equation by linearization and diagonalization.
The analysis of the stability region helps users to narrow down their candidates of the integrators for the target differential equations according to the characteristics of the equations. See, e.g., \cite{Hairer2013-fz,Butcher2016} for details.

Following this approach, we propose a framework for analyzing the learnability of numerical integrators. More precisely, we will introduce \textit{the learnability coefficient}, which characterizes the dynamics of which the given numerical integrator is suitable for modeling.



First of all, we confirm the learning process assumed in this paper. We suppose that the target differential equation is learned by the model
    $\dot{\hat{x}}(t) = \hat{f}(\hat{x}; \theta)$,
where $\hat{f}$ is a function that is represented by, e.g., a multilayer perceptron and $\theta$ denotes the model parameters. As a model, we mainly consider neural network models, but we only assume the universal approximation property for the model~(e.g., \citet{Hornik1989}).

For the data, we suppose that only the states $x$ are observable, and therefore the derivatives $\dot{x}$ are not available. To focus on the modeling errors caused by numerical integrators, we consider an ideal situation, where a sufficient amount of the noise-free data are given and they are sampled at a fixed sampling rate $1/h$, thereby supposing that the data are given as a set of pairs $\mathcal{D} := \{ (x_\d^{(n)}, x_\d^{(n+1)})\}$, where $x_\d^{(n)}$ denotes the data sampled at $t=n h$.

For training, 
$\hat{f}(\hat{x}; \theta)$  
is assumed to be learned by minimizing
$
    \sum_{(x_\d^{(n)}, x_\d^{(n+1)}) \in \mathcal{D}} \| {x}^{(n+1)}_\d - \hat{x}^{(n+1)} \|
    $
for a specified norm  $\| \cdot \|$, where $\hat{x}^{(n+1)}$ is given as the numerical solution by the concerned integrator:
$
    \hat{x}^{(n+1)} = {x}_\d^{(n)} + h \hat{f}_{\numer} ({x}_\d^{(n)}, \hat{x}^{(n+1)}; \theta),
    $
where $h \hat{f}_{\numer} ({x}_\d^{(n)}, \hat{x}^{(n+1)})$ is the increment 
numerically computed by the integrator.

Following the stability analysis,
we focus on the case where the target equation is the Dahlquist test equation \eqref{eq:test-eq}. In this case, the data set becomes $\mathcal{D} = \{ (x_\d^{(n)}, x_\d^{(n+1)}= \mathrm{e}^{h \lambda } x_\d^{(n)})\}$ and the loss function is 
\begin{align}
    l(\theta; \mathcal{D}) :=
    \sum\nolimits_{x_\d^{(n)}} \| \mathrm{e}^{h \lambda } x_\d^{(n)} - \hat{x}^{(n+1)} \|.
    \label{eq:test-learn}
\end{align}
As we assumed the universal approximation property of the model, $\hat{f}$ can represent arbitrary functions by appropriately choosing the parameters $\theta$.
Hence in particular $\hat{f}$ can be a linear function $\hat{f}(\hat{x}; \theta) = \alpha \hat{x}$ with $\alpha \in \mathbb{C}$, which is in the same class of functions as the target equation $\dot{x} = \lambda x$.
In fact, there exists an optimal linear function that reduces the loss function to zero (see Theorem \ref{thm:1}.)
By using such $\alpha$, we define the learnability coefficient in the following way.

\begin{dfn}
    For each $\alpha$ that eliminates \eqref{eq:test-learn}, we define the learnability coefficient $\lc_\alpha$ by
    \begin{align*}
        \lc_\alpha := \left| \frac{\alpha - \lambda}{\lambda} \right|.
    \end{align*}
\end{dfn}
We also define the relative error of the real part and the imaginary part of the $\alpha$ independently as the learnability coefficients for decaying and oscillating part.
\begin{dfn}
    For each $\alpha$ that eliminates \eqref{eq:test-learn}, we define the componentwise learnability coefficients $\lc_\alpha^{\mathcal{R}}$ and $\lc_\alpha^{\mathcal{I}}$ for the real part and the imaginary part respectively by 
    $\displaystyle
        \lc_\alpha^{\mathcal{R}} := \left| \frac{\mathrm{Re}\ \alpha - \mathrm{Re}\ \lambda}{\mathrm{Re}\ \lambda} \right|
        \ ,\ \ 
        \lc_\alpha^{\mathcal{I}} := \left| \frac{\mathrm{Im}\ \alpha - \mathrm{Im}\ \lambda}{\mathrm{Im}\ \lambda} \right|.
    $
\end{dfn}

\section{The learnability analysis of the Runge--Kutta methods}

In this section, we show the learnability coefficient for the general Runge--Kutta methods:
\begin{align}
    \hat{x}^{(n+1)} = \hat{x}^{(n)} + h \sum_{i=1}^{p} b_i k_i, \quad
    k_i = \hat{f} (
    \hat{x}^{(n)} + h \sum_{j=1}^p a_{ij} k_j
    ), \label{eq:rk-method}
\end{align}
where $p$, $a_{ij}$'s, and $b_{j}$'s are the constants that define the 
method (see, e.g., \citet{Butcher2016}). 
The matrix and the vector defined by $a_{ij}$ and $b_j$ are respectively denoted by $A$ and $b$.

\begin{thm}[main result]\label{thm:1}
    If the equation 
    \eqref{eq:test-eq}
    is discretized by a Runge--Kutta method \eqref{eq:rk-method}, there exists an $\alpha$ such that the model with
       $ \hat{f}(\hat{x}) = \alpha \hat{x} $
    reduces the loss function \eqref{eq:test-learn} to zero.
    Moreover, $\alpha$ is a solution to
    \begin{align}
         1 +  h \alpha b^{\top}(I - h \alpha A)^{-1} \mathbbm{1} - \e^{h \lambda} = 0, \quad
        \det (I - h \alpha A) \neq 0, \label{eq:alpha}
        \quad \mathbbm{1} = (1 \ 1 \ \cdots \ 1)^\top.
    \end{align}
\end{thm}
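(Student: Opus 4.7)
My plan is to exploit the linearity of the ansatz $\hat f(\hat x) = \alpha \hat x$. Under this ansatz the Runge--Kutta stages can be solved in closed form, so the one-step map reduces to multiplication of $\hat x^{(n)}$ by the classical RK stability function $R(h\alpha) := 1 + h\alpha\, b^{\top}(I - h\alpha A)^{-1}\mathbbm{1}$. Matching this to the exact one-step operator $\e^{h\lambda}$ of the test equation then yields \eqref{eq:alpha}, and the existence of a complex solution will follow from the fundamental theorem of algebra applied to the polynomial obtained by clearing denominators.

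First I substitute $\hat f(\hat x) = \alpha \hat x$ into the stage equations in \eqref{eq:rk-method} to obtain $k_i = \alpha \hat x^{(n)} + h\alpha \sum_{j} a_{ij} k_j$. Stacking the $k_i$'s into a vector $K \in \C^p$ turns this into the linear system $(I - h\alpha A) K = \alpha \hat x^{(n)} \mathbbm{1}$. Provided $\det(I - h\alpha A) \neq 0$, inversion gives $K = \alpha \hat x^{(n)} (I - h\alpha A)^{-1}\mathbbm{1}$, and substituting into the output step yields
\begin{equation*}
    \hat x^{(n+1)} = \bigl[1 + h\alpha\, b^{\top}(I - h\alpha A)^{-1}\mathbbm{1}\bigr]\, \hat x^{(n)} = R(h\alpha)\, \hat x^{(n)}.
\end{equation*}

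Because each summand of \eqref{eq:test-learn} is a non-negative norm, the loss vanishes if and only if every summand does. Using $\hat x^{(n)} = x_\d^{(n)}$ together with $x_\d^{(n+1)} = \e^{h\lambda} x_\d^{(n)}$ from the data, this is equivalent to $R(h\alpha) = \e^{h\lambda}$, which is exactly \eqref{eq:alpha}. To produce a solution I would clear the denominator using Cramer's rule and view the result as a polynomial equation in $z := h\alpha$ of degree at most $p$. This polynomial is not identically zero (indeed $R \not\equiv \e^{h\lambda}$ since $R(0) = 1$), so the fundamental theorem of algebra supplies a complex root $z_\star$, and setting $\alpha := z_\star / h$ yields the desired coefficient.

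The main subtlety I foresee is ensuring that the chosen root is not a pole of $R$, so that the non-degeneracy condition $\det(I - h\alpha A) \neq 0$ stated in \eqref{eq:alpha} holds at $\alpha$. This is essentially automatic: at a genuine pole $z_0$ of $R$ the expression $b^{\top} \mathrm{adj}(I - z_0 A)\mathbbm{1}$ is nonzero by definition of a pole, so after clearing the denominator the polynomial evaluates at $z_0$ to a nonzero quantity; consequently no root of the cleared polynomial can coincide with a pole of $R$, and the candidate $\alpha$ is automatically legitimate.
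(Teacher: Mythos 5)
Your derivation of the learnability equation is the same as the paper's: substitute the linear ansatz $\hat f(\hat x)=\alpha\hat x$ into the stage equations, solve the resulting linear system $(I-h\alpha A)K=\alpha \hat x^{(n)}\mathbbm{1}$ under the nondegeneracy condition, and match the resulting one-step multiplier $R(h\alpha)=1+h\alpha\,b^{\top}(I-h\alpha A)^{-1}\mathbbm{1}$ to the exact propagator $\e^{h\lambda}$. Where you go beyond the paper is the existence half of the claim: the paper's proof is purely a necessity argument (``suppose the loss vanishes\dots hence $\alpha$ should satisfy\dots'') and never actually exhibits a solution, whereas you clear denominators, invoke the fundamental theorem of algebra on the resulting degree-$\le p$ polynomial, and check that its roots cannot sit at poles of $R$. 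That last check is the right instinct and is clean for explicit methods (where $\det(I-zA)\equiv 1$, the case the paper actually uses in its examples); for implicit tableaux note that your pole argument excludes genuine poles but not removable singularities of $R$, i.e.\ points where $\det(I-z_0A)=0$ and $b^{\top}\mathrm{adj}(I-z_0A)\mathbbm{1}=0$ simultaneously, at which the cleared polynomial also vanishes --- a degenerate edge case worth flagging but not a real obstruction. You also implicitly use $\sum_i b_i=1$ (consistency) to guarantee $R$ is non-constant so that the cleared polynomial is not a nonzero constant; it would be worth stating that hypothesis explicitly, since the paper's generic tableau \eqref{eq:rk-method} does not impose it. Net: your proof is correct, follows the paper's computation, and additionally closes the existence gap the paper leaves implicit.
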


\begin{proof}[Proof of Theorem \ref{thm:1}]
    Suppose that the loss function \eqref{eq:test-learn} vanishes for the model with
        $\hat{f}(\hat{x}) = \alpha \hat{x}$.
    If this equation is discretized by the Runge--Kutta method with the initial condition $\hat{x}(n h) = x^{(n)}_\d$, the following equation holds
    \begin{align*}
        \hat{x}^{(n+1)} = x^{(n)}_\d + h b^\top k, \quad
        k = \alpha  ({x}^{(n)}_\d \mathbbm{1} + h A k).
    \end{align*}
    If $\det (I - h \alpha A) \neq 0$ and $\| \hat{x}^{(n+1)} - x_\d^{(n+1)} \|$ is zero for all ${x}^{(n)}_\d$, we get
    $
        \hat{x}^{(n+1)}
        = \mathrm{e}^{h \lambda} {x}^{(n)}_\d
        = x^{(n)}_\d + {x}^{(n)}_\d h \alpha b^\top (I - h \alpha A)^{-1}  \mathbbm{1}.
    $
    Hence, $\alpha$ should satisfy
        $\mathrm{e}^{h \lambda }
        = 1 + h \alpha b^\top (I - h \alpha A)^{-1}  \mathbbm{1}.
        $
\end{proof}

\begin{dfn}
    We call equation \eqref{eq:alpha} \textit{the learnability equation} for the Runge--Kutta method.
\end{dfn}

\begin{rmk}
    In general, equation \eqref{eq:alpha} admits $p$ solutions and hence $p$ learnability coefficients exist for a Runge--Kutta method with $p$ stages. 
    In particular, the model is not uniquely determined when trained as assumed in this paper; see the examples below.
\end{rmk}

\begin{thm}\label{thm:2}
    For the Runge--Kutta methods, the learnability coefficient
    is a function of $z := h \lambda$.
\end{thm}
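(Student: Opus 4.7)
The plan is to perform the standard nondimensionalizing substitution $w := h\alpha$ and $z := h\lambda$ in the learnability equation \eqref{eq:alpha}. With this substitution, the factors of $h$ cancel everywhere: $h\alpha A = wA$, so $(I - h\alpha A)^{-1} = (I - wA)^{-1}$, and the left-hand side of \eqref{eq:alpha} becomes
\begin{equation*}
    1 + w\, b^{\top}(I - wA)^{-1}\mathbbm{1} - \e^{z} = 0,
    \qquad \det(I - wA) \neq 0.
\end{equation*}
This equation involves only $w$ and $z$, with no remaining dependence on $h$ or $\lambda$ individually. Hence the set of admissible $w$'s (i.e., the $p$ roots guaranteed by the remark following Theorem \ref{thm:1}) is determined purely by $z$; each root defines an implicit function $w = w(z)$.

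Next I would rewrite the learnability coefficient itself in terms of $w$ and $z$. Since $h$ is a positive real step size, dividing numerator and denominator of $\ell_\alpha = |(\alpha - \lambda)/\lambda|$ by $h$ yields
\begin{equation*}
    \ell_\alpha = \left| \frac{h\alpha - h\lambda}{h\lambda} \right| = \left| \frac{w(z) - z}{z} \right|,
\end{equation*}
which is manifestly a function of $z$ alone. The same manipulation works componentwise: because $h$ is real, $\mathrm{Re}\,\alpha / \mathrm{Re}\,\lambda = \mathrm{Re}\,w / \mathrm{Re}\,z$ and similarly for imaginary parts, so $\ell_\alpha^{\mathcal{R}}$ and $\ell_\alpha^{\mathcal{I}}$ are also functions of $z$ if one wishes to state the stronger version.

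The only subtlety worth flagging is multivaluedness: the learnability equation has $p$ roots, so strictly speaking one should speak of $p$ branches $w_1(z), \ldots, w_p(z)$, each giving rise to its own learnability coefficient $\ell_{\alpha_i}(z)$. This is not really an obstacle, just a bookkeeping point consistent with the remark after Theorem \ref{thm:1}; the statement holds branch by branch. Overall I expect no serious technical difficulty, the proof being essentially a dimensional-analysis observation analogous to the classical fact that the stability function $R(z) = 1 + z b^{\top}(I - zA)^{-1}\mathbbm{1}$ depends only on the product $z = h\lambda$.
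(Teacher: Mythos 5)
Your proposal is correct and follows essentially the same route as the paper: both arguments nondimensionalize the learnability equation so that it involves only $z = h\lambda$ and one rescaled unknown, and then express $\ell_\alpha$ in terms of that unknown; your variable $w = h\alpha$ and the paper's $\mu = \alpha/\lambda$ differ only by the factor $z$, since $w = \mu z$ and $\ell_\alpha = |\mu - 1| = |(w-z)/z|$. Your side remark on the $p$ branches and on the componentwise coefficients matches the paper's surrounding remark and Theorem \ref{thm:3}, so there is nothing to add.
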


\begin{proof}[Proof of Theorem \ref{thm:2}]
    From the learnability equation, we have
        $ 1 + (\alpha/\lambda) h \lambda b^{\top}(I - (\alpha/\lambda) h \lambda A)^{-1} \mathbbm{1} - \e^{h \lambda} = 0, 
        \det (I - (\alpha/\lambda) h \lambda A) \neq 0.$
    Therefore, we get
        $
         1 + (\alpha/\lambda) z b^{\top}(I - (\alpha/\lambda) z A)^{-1} \mathbbm{1} - \e^{z} = 0, 
         \det (I - (\alpha/\lambda) z A) \neq 0,
         $
    which shows that $\alpha/\lambda$ and hence ${\alpha}/{\lambda} - 1$ are functions of $z$.
\end{proof}

\begin{thm}\label{thm:3}
    For the Runge--Kutta methods, the componentwise learnability coefficients are functions of $z := h \lambda$ and $\overline{z} := h \overline{\lambda}$.
\end{thm}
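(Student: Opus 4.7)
The plan is to combine Theorem \ref{thm:2} with the fact that the Runge--Kutta coefficients $A$ and $b$ are real-valued. Writing $w := \alpha/\lambda$, Theorem \ref{thm:2} already tells us that $w$ is determined by $z$ alone through the scaled learnability equation. The additional ingredient needed for Theorem \ref{thm:3} is that the complex conjugate $\overline{w}$ is correspondingly determined by $\overline{z}$; combined with the elementary identities $\mathrm{Re}\,\zeta = (\zeta + \overline{\zeta})/2$ and $\mathrm{Im}\,\zeta = (\zeta - \overline{\zeta})/(2\mathrm{i})$, this yields the claim.

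First I would take the complex conjugate of the scaled learnability equation
\[
1 + w z\, b^{\top}(I - w z A)^{-1}\mathbbm{1} - \e^{z} = 0.
\]
Because $A$, $b$, and $\mathbbm{1}$ have real entries, and conjugation commutes with matrix inversion and with the scalar exponential, the conjugated equation is exactly the original one with $(w, z)$ replaced by $(\overline{w}, \overline{z})$. Hence $\overline{w}$ is itself a root of the scaled learnability equation at $\overline{z}$, so $\overline{w}$ is a function of $\overline{z}$.

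Next I would compute, using $h\in\R$,
\[
\frac{\mathrm{Re}\,\alpha - \mathrm{Re}\,\lambda}{\mathrm{Re}\,\lambda}
= \frac{\lambda w + \overline{\lambda}\,\overline{w} - \lambda - \overline{\lambda}}{\lambda + \overline{\lambda}}
= \frac{z\,(w - 1) + \overline{z}\,(\overline{w} - 1)}{z + \overline{z}},
\]
where the final equality follows from multiplying both numerator and denominator by the real scalar $h$. Since $w$ depends only on $z$ and $\overline{w}$ only on $\overline{z}$, taking absolute values exhibits $\lc_\alpha^{\mathcal{R}}$ as a function of $(z, \overline{z})$. An entirely analogous computation, now using $\mathrm{Im}\,\zeta = (\zeta - \overline{\zeta})/(2\mathrm{i})$, handles $\lc_\alpha^{\mathcal{I}}$.

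The main subtlety, more than a genuine obstacle, is that the learnability equation admits $p$ solutions, so $w$ is really a particular branch. Once $\alpha$ is fixed for a given $\lambda$, however, both $w$ and $\overline{w}$ are uniquely specified, and the conjugation argument canonically pairs the chosen branch at $z$ with a specific branch at $\overline{z}$. This branch-level reading already appears implicitly in Theorem \ref{thm:2}, so no additional bookkeeping is needed beyond noting that the identification $\overline{w}\mapsto\overline{z}$ fixes a consistent choice among the $p$ roots.
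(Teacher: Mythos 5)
Your proposal is correct and follows essentially the same route as the paper: conjugate the real-coefficient learnability equation to conclude that $\overline{\alpha}/\overline{\lambda}$ is a function of $\overline{z}$, then use $\mathrm{Re}\,\zeta = (\zeta+\overline{\zeta})/2$ to write $\mathrm{Re}\,\alpha/\mathrm{Re}\,\lambda$ as a function of $z$ and $\overline{z}$. Your explicit justification of the conjugation step (realness of $A$, $b$, $\mathbbm{1}$) and the remark on branch selection merely spell out what the paper asserts without detail.
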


\begin{proof}[Proof of Theorem \ref{thm:3}]
    We show the proof for $\lc_\alpha^{\mathcal{R}}$.
    If $\alpha$ is the solution of the learnability equation for $\lambda$,
    then $\overline{\alpha}$ is also the solution of the equation for $\overline{\lambda}$.
    Thus, in the same way as the proof of Theorem \ref{thm:2}, $\overline{\alpha}/\overline{\lambda}$ is shown to be a function of $\overline{z} := h \overline{\lambda}$.
    On the other hand, a part of the definition of $\lc_\alpha^{\mathcal{R}}$ is rearranged in
    $\displaystyle
        \frac{\;\mathrm{Re}\ \alpha\;}{\mathrm{Re}\ \lambda}
        =  \frac{\;\alpha + \overline{\alpha}\;}{\lambda + \overline{\lambda}}
        =  \left.\left(\cfrac{\;\alpha\;}{\lambda} + \cfrac{\;\overline{z}\;}{z} \cfrac{\;\overline{\alpha}\;}{\overline{\lambda}}\ \right)\middle/\left(1 + \cfrac{\;\overline{z}\;}{z}\right)\right.,
    $
    which shows that $\mathrm{Re}\ \alpha/\mathrm{Re}\ \lambda$ and hence $|\mathrm{Re}\ \alpha/\mathrm{Re}\ \lambda - 1|$ is a function of $z$ and $\overline{z}$.
\end{proof}

\begin{rmk}\label{rmk:scaling}
    This scaling property of the learnability coefficients is important as evaluation criteria for designing numerical integrators for learning dynamics. 
    Evaluation criteria should be determined by the modeling errors, while the modeling errors depend on $h$. 
   However, because changing $h$ will change the measured performance of the integrators, naive evaluation criteria that depend on 
    $h$ are not appropriate for designing numerical integrators. 
    Thus evaluation criteria with certain invariance with respect to $h$ are indispensable.
\end{rmk}

\section{Examples} \label{sec:example}
\begin{exm}
    The learnability equation of the explicit Euler method is
        $1 + h \alpha - \e^{h \lambda} = 0$,
    which gives a unique $\alpha$: $\alpha =  {\e^{h \lambda} - 1}/{h}$.
    In addition, the learnability coefficient, which is a relative modeling error, is
    $\ell_\alpha
        =  \left| (\e^{z} - 1)/z - 1 \right|$ with $z = h \lambda$.
\end{exm}

\begin{figure*}[tbh]
    \centering
    \begin{minipage}[t]{0.23\textwidth}
        \centering
        \includegraphics[width=\textwidth]{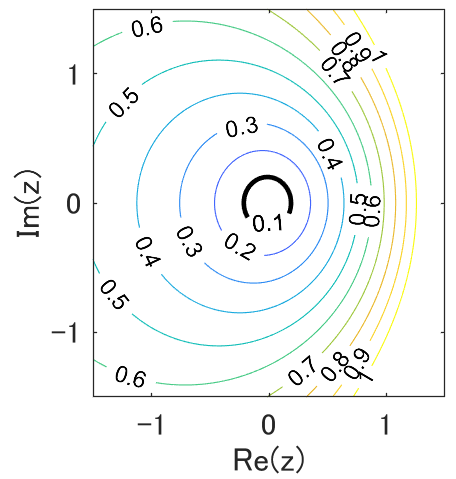}
        \subcaption{Explicit Euler method.}
    \end{minipage}
    \quad
    \begin{minipage}[t]{0.23\textwidth}
        \centering
        \includegraphics[width=\textwidth]{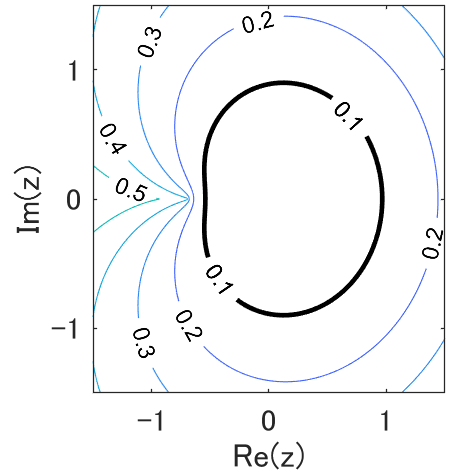}
        \subcaption{Explicit midpoint method ($\alpha_+$). }
    \end{minipage}
    \quad
    \begin{minipage}[t]{0.23\textwidth}
        \centering
        \includegraphics[width=\textwidth]{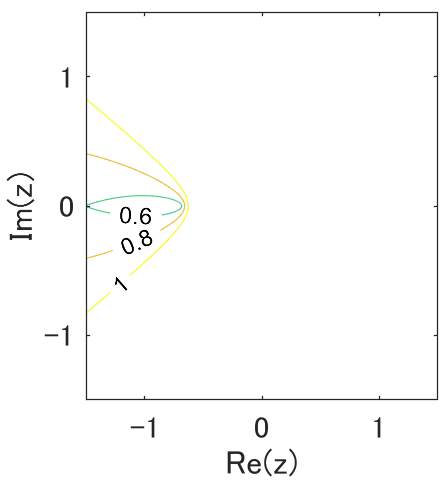}
        \subcaption{Explicit midpoint method ($\alpha_-$).}
    \end{minipage}
    \caption{The contour lines of the learnability coefficients. 
    \label{fig:contour}}
\end{figure*}

\begin{exm}
    For the explicit midpoint method, the learnability equation becomes
        $1 + h \alpha + \frac{h^2 \alpha^2}{2} - \e^{h \lambda} = 0$
    and admits the two solutions:
        $\alpha = (- 1 \pm \sqrt{2 \e^{\lambda h} - 1})/h$
    , where the square root represents the principal value.
    Among these two solutions, calculating the Taylor series expansion around $\lambda = 0$, $\alpha_+ := (- 1 + \sqrt{2 \mathrm{e}^{h \lambda} - 1})/h$ is a 2nd-order approximation to $\lambda$.
    \if 0
    In fact, the Taylor series expansion yields
    \begin{align*}
        \alpha_+ = \frac{- 1 + \sqrt{2 \mathrm{e}^{h \lambda} - 1}}{h}
        = \lambda + \frac{h^2 \lambda^3}{6} + O(h^3).
    \end{align*}
    \fi
    Meanwhile, $\alpha_- := (- 1 - \sqrt{2 \mathrm{e}^{h \lambda} - 1})/h$ is not an approximation. 
    \if 0
    
    \begin{align*}
    \alpha_- =
          \frac{- 1 - \sqrt{(h \lambda + 1)^2}}{h} = - \frac{2}{h} - \lambda - \frac{h^2 \lambda^3}{6} + O(h^3).
    \end{align*}
    
    \fi
    This means that the model with the explicit midpoint method is not uniquely identifiable and, moreover, the learned model may be completely different from the true dynamics.
\end{exm}

The contour lines of the learnability coefficients for the above methods are shown in Figure \ref{fig:contour}. As is expected, the errors are smaller for the explicit midpoint method than for the Euler method when the model corresponding to $\alpha_+$ is learned. Meanwhile, it can be seen from the figure that the midpoint method is not effective for dynamics with strong damping
since the error increases as $\lambda$ goes in the negative direction on the real axis.

\begin{exm} \label{exm:rk4}
    The learnability equation of the classical 4th order Runge--Kutta method is
        $
        1 + h \alpha + \frac{h^2 \alpha^2}{2} + \frac{h^3 \alpha^3}{6} + \frac{h^4 \alpha^4}{24} - \e^{h \lambda} = 0.
        $
    We computed all of the solutions of the equation numerically and extracted the solution 
    closest to $\lambda$ as $\alpha$. 
    The contour lines of the learnability coefficient computed in the above way 
    are shown in Figure \ref{fig:rk4-region}.
\end{exm}
Example \ref{exm:rk4} well explains the result of the learning test of the equation with $\lambda=1.5i, h=1$ shown in Figure \ref{fig:rk4-traj-comparison}.
The experiment was conducted under the following settings.

The neural network model $f_{\nn}$ is a multilayer perceptron, consisting of a fully-connection hidden layer. The input and output layers have 2 units that correspond to the real part and the imaginary part of the input and the output.
The number of hidden units was 200. We used $\tanh$ as the activation function.
We used the Adam method for training and specified the learning rate as 0.001.
Since we used the neural network models, the model function $\hat{f}$'s were not linear, we regarded the average value of $\hat{x}^{(n+1)}/x_\d^{(n)}$ over all $x_\d^{(n)}$'s used in the training process as an estimation of $\alpha$. Actually, for each model, these values were almost constant.  

For the $\lambda$, we prepared the training data as tuples
    $\{(x_1, x_0) \mid x_1 = \e^{h \lambda} x_0\}$
where we uniformly randomly sampled 10000 points for $x_0$'s from $-10 \leq \mathrm{Re}\ x_0 \leq 10, -10 \leq \mathrm{Im}\ x_0 \leq 10$.
After that, the neural network was trained by minimizing the mean squared error
    $M^{-1} \sum_{x_0} \| x_1 - \hat{x}_1\|_2^2,$
where $M$ is the number of the data and $\hat{x}_1$ is the numerical one-step solution of the model $f_{\nn}$ using the classical 4th order Runge--Kutta method.

The black line shows the real part of the exact solution of the target dynamics.
The blue line is the predicted dynamics using $\alpha$ above, and the red one is the actually learned dynamics $f_{\nn}$ in the experiment calculated using \texttt{scipy.integrate.solve\_ivp} with \texttt{RK45} option.
These all oscillate at similar frequencies, but the amplitude of the blue and red ones increases unlike the black one.
The behavior is consistent with the learnability coefficients for the real and imaginary parts.
In fact, $\lc_{\alpha}^{\mathcal{I}}$ is relatively small at $\lambda=1.5$, explaining the small errors in the frequencies; on the other hand, $\lc_{\alpha}^{\mathcal{R}}$ takes large values around the imaginary axis a little away from the origin, which results in the errors in the amplitudes.

\begin{figure*}[tbh]
    \centering
    \includegraphics[width=0.8\textwidth]{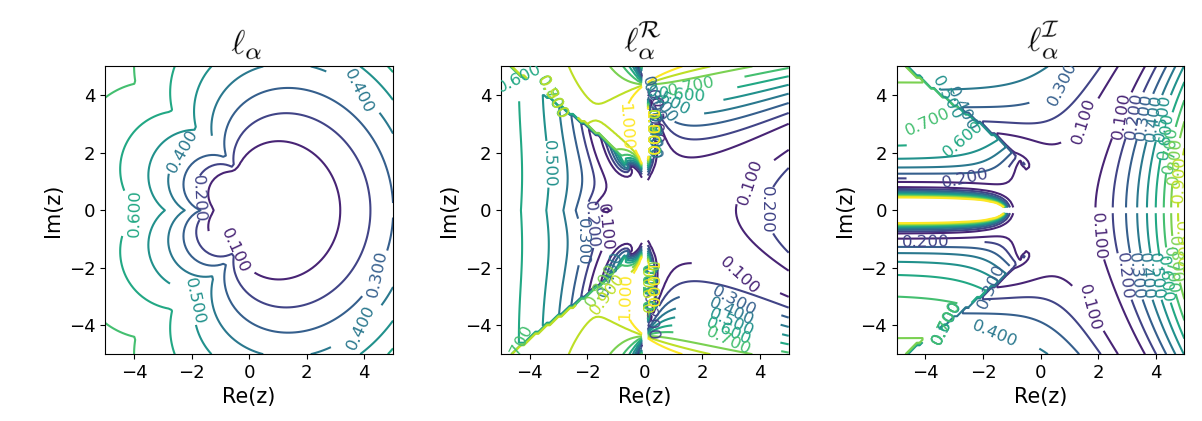}
    \caption{The contour lines of the learnability coefficients of the classical 4th order Runge--Kutta method. The $\alpha$ used to calculate the coefficients is found numerically, unlike the case of Euler method and the explicit midpoint method. 
        \label{fig:rk4-region}}
\end{figure*}

\begin{figure}[tbh]
    \centering
    \begin{minipage}[t]{0.45\linewidth}
        \centering
        \includegraphics[width=1.0\linewidth]{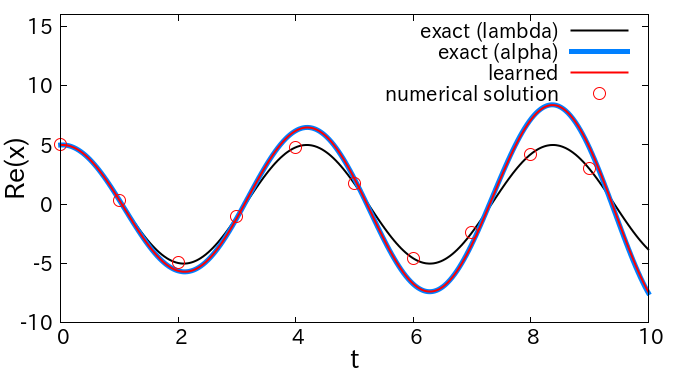}
        \caption{A comparison between the dynamics actually learned and the dynamics predicted by the theoretical results.
            \label{fig:rk4-traj-comparison}}
    \end{minipage}
    \quad
    \begin{minipage}[t]{0.45\linewidth}
        \centering
        \includegraphics[width=0.75\linewidth]{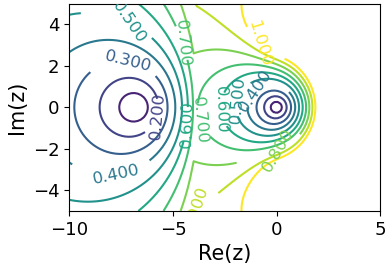}
        \caption{The learnability coefficient of the method determined by \eqref{eq:stab_chebyshev}.}\label{fig:rock}
    \end{minipage}
\end{figure}

\section{Application for designing numerical schemes} \label{sec:design}
A motivation for the introduction of the learnability coefficient is designing numerical integrators for learning differential equations. 
To this end, we propose a procedure for designing integrators.  

First, a region $\Omega$ on which the learning coefficient $l_\alpha(\lambda)$ should be small must be specified 
according to the characteristics of the target phenomena; for example, if we want to model dissipative phenomena, $\Omega$ should contain the negative real axis.

Second, because the learning coefficient $l_\alpha(\lambda)$ must satisfy \eqref{eq:alpha}, instead of specifying the Runge--Kutta method by the Butcher tableau, we design the corresponding equation \eqref{eq:alpha}. In fact, the term 
        $1 + h \alpha b^{\top}(I - h \alpha A)^{-1} \mathbbm{1}$
    in this equation is essentially the stability polynomial of the Runge--Kutta method~\cite{Hairer2013-fz}. 
    Realization methods of Runge--Kutta methods from the given stability polynomial have been developed.
    For example, in the Lebedev method \cite{Lebedev1989-ey, Lebedev1994-yc} the desired numerical integrator is implemented as a composition of a series of simple numerical schemes like the explicit Euler method.

As an illustration, we consider numerical integrators for learning dissipative differential equations; 
we want to design integrators of which 
the learnability coefficient $l_\alpha(\lambda)$ is small on an interval $[-r_0, 0)$ with $r_0 > 0$ as large as possible.

First, we rewrite \eqref{eq:alpha} to
        $1 + \mu z b^{\top}(I - \mu z A)^{-1} \mathbbm{1} - \e^{h \lambda} = 0$, 
     where $\mu = \alpha/\lambda$ and $z = h \lambda$.
     Because $\mu = 1$ is preferable, we want the absolute value of the left-hand side of
        $1 + z b^{\top}(I - z A)^{-1} \mathbbm{1} - e^z = 0$ 
     to be small on $[-r_0, 0)$. If $r_0$ is sufficiently large and hence $e^{z}$ is small, 
     we can consider
        $1 + z b^{\top}(I - z A)^{-1} \mathbbm{1}$
    for simplicity.
   For this function to be small, we set this function to oscillate around zero while satisfying the conditions for ensuring that the associated Butcher tableau certainly defines a numerical integrator. This approach is employed to design a class of highly stable explicit Runge--Kutta methods for dissipative differential equations; in those researches, it is known that      the Chebyshev polynomials are optimal solutions in the sense that large $r_0$ can be used.

 For example, the 2-stage method is given by
    \begin{align}\label{eq:stab_chebyshev}
        1 + b^{\top}(I - z A)^{-1} \mathbbm{1} z
        = 1 + z + \frac{1}{8}z^2.
    \end{align}
    The learnability coefficient of this method is shown in Figure \ref{fig:rock}, in which we can confirm the quite better performance on the negative real axis than the explicit midpoint method, which is also a 2-stage method. For higher stage methods defined by the Chebyshev polynomials, see \cite{Hairer2013-fz}.

\section{Conclusions}

In recent years, methods for constructing differential equation models from data by using deep neural networks have been widely studied.
In such methods, the models {are often discretized by numerical integrators} when learning, but the effects of the discretization have not been well studied theoretically.
To appropriately select and/or design numerical integrators, evaluation criteria for the errors are required. In this paper, we have introduced the learnability coefficient as such a criterion along with the detailed analysis of {Runge--Kutta methods} and designed a method for learning dissipative systems.
As future work, further investigation on the uniqueness of the learned model is needed.

\section*{Acknowledgements}

This work was supported by the JST CREST [Grant Number JPMJCR1914], JST PRESTO [Grant Number JPMJPR21C7] and JSPS KAKENHI [Grant Number 20K11693].

\bibliographystyle{unsrtnat}
\bibliography{references}  






\end{document}